      \theoremstyle{plain}
      \newtheorem{theorem}{Theorem}[section]
      \newtheorem{lemma}[theorem]{Lemma}
      \newtheorem{proposition}[theorem]{Proposition}
      \newtheorem{definition}[theorem]{Definition}      
\numberwithin{equation}{section}
      \def\@setcopyright{}
      \def\serieslogo@{}
\def\R{\mathbb R}
\def\Z{\mathbb Z}
\def\diam{\text{diam}}
\def\Id{\text{Id}}
\def\d{\delta}
\def\e{\varepsilon}
\def\od{\overline{\dim}}
\def\ud{\underline{\dim}}
\def\tr{\tilde{r}}
\def\Ci{C^\infty}
\def\QED{\hfill\hfill{\square}}
\begin{document}

\date{August 31, 2008}
\author{Victoria Sadovskaya$^\ast$}

\address{Department of Mathematics $\&$ Statistics, 
 University of South  Alabama, Mobile, AL 36688, USA}
\email{sadovska@jaguar1.usouthal.edu}

\title [Dimensional characteristics of invariant  measures for circle diffeos $\;\;$]
{Dimensional characteristics of invariant  measures for circle diffeomorphisms} 

 \thanks{$^\ast$ Supported in part by NSF grant DMS-0401014}

%%%%%%%%%%%%%%%%%%%%%%%%%%%%%%%

\begin{abstract} We consider pointwise, box, and Hausdorff dimensions
of invariant measures for  circle diffeomorphisms. 
We discuss the cases of rational, Diophantine, and Liouville 
rotation numbers. Our main result is that for any Liouville 
number $\tau$ there exists a $C^\infty$ circle diffeomorphism 
with rotation number $\tau$ such that the pointwise and box
dimensions of its unique invariant measure do not exist.
Moreover, the lower pointwise and lower box dimensions 
can equal any value $0\le \beta \le 1$.
\end{abstract}

\maketitle 

%%%%%%%%%%%%%%%%%%%%%%%%%%%%
%%%%%%%%%% Introduction %%%%%%%%%%%
%%%%%%%%%%%%%%%%%%%%%%%%%%%%

\section{Introduction}

The study of dimensional characteristics of invariant sets and
measures was originated by physicists and applied mathematicians
in the context of strange attractors.
Beginning with the work of Eckmann and Ruelle \cite{ER} 
it developed into a rigorous mathematical theory. Dimension 
theory now plays an important role in dynamics \cite{P}.
Dimensional properties of invariant sets and measures are 
often related to other characteristics of the dynamical system,  
such as Lyapunov exponents and entropy.

In this paper we study pointwise, box, and Hausdorff dimensions
of invariant measures for circle diffeomorphisms. The notion of 
pointwise (or local) dimension was introduced by Young in \cite{Y}.
It plays an important role in  dimension theory of dynamical systems.
For a Borel measure $\mu$ on a metric space $X$,  
its {\em lower and upper pointwise dimensions} at  
a point $x$ are defined as 
$$ 
  \underline{d}_{\mu}(x)=\liminf_{r\to 0}\, 
 \frac{\log{\mu(B(x,r))}}{\log{r}} \quad \text{ and } 
  \quad   \overline{d}_{\mu}(x)=\limsup_{r\to 0}\, 
  \frac{\log{\mu(B(x,r))}}{\log{r}}, 
$$
where $B(x,r)$ is a ball of radius $r$ centered at $x$. 
If the two limits coincide, then their common value $d_\mu(x)$ 
is called the {\em pointwise dimension}\, of $\mu$ at $x$.
The pointwise dimension describes the local distribution of the measure and 
of a typical orbit. It serves as an important 
tool for estimating the Hausdorff and box dimensions of 
measures and sets, see Section \ref{Hausdorff, box}. 
For example, if the pointwise dimension of an ergodic measure $\mu$
exists almost everywhere, then all dimensional characteristics  
of $\mu$ coincide and give a fundamental characteristic of $\mu$ 
called the fractal dimension. 
 
In \cite{BPS}  Barreira,  Pesin, and Schmeling
showed that pointwise dimension exits for any hyperbolic invariant 
measure of a $C^{1+\alpha}$ diffeomorphism.
Dimensional structures of non-hyperbolic measures can be more 
complicated. In \cite{LM} Ledrappier and Misiurewicz constructed
an example of a $C^r$ map of an interval preserving an ergodic
measure whose pointwise dimension does not exist almost 
everywhere. A natural class of non-hyperbolic measures is given 
by invariant measures for circle diffeomorphisms with irrational
rotation numbers. In  \cite{KS} we constructed examples of such 
diffeomorphisms for which pointwise dimension of the measures
does not exist almost everywhere. 

\vskip.1cm
  
Dimensional properties of an invariant measure of a circle 
diffeomorphism $f$ depend significantly on the rotation
number of $f$ (see Section \ref{rotation}). First we  
consider the simpler cases of  rational and  Diophantine
numbers, and then describe our main result for Liouville  numbers.
The rotation number of $f$ is rational if and only 
if $f$ has periodic points. Such a diffeomorphism may
preserve a variety of measures with different properties,
however, any {\em ergodic} invariant measure 
for $f$ is a uniform $\delta$-measure on a periodic orbit.
This immediately implies the following result.

\begin{proposition} \label{rational} 
Let $f$ be a circle homeomorphism with a rational rotation 
number and let $\mu$ be an {\em ergodic} invariant measure 
for $f$. Then 
    \begin{enumerate}
        \item $d_{\mu}(x)=0$  for $\mu$-almost every $x$ in $S^1$,
        \item $\dim_H \mu =\ud_{B\,} \mu = 
                   \od_{B\,} \mu=0.$    
\end{enumerate}
\end{proposition}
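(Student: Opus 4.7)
The plan is to leverage the structural fact stated just before the proposition: since $f$ has a rational rotation number, every ergodic invariant measure for $f$ is the uniform atomic measure on a single periodic orbit. Granting this, I would write the support of $\mu$ as $\{x_0,x_1,\dots,x_{q-1}\}$, where $q$ is the common period, with $\mu(\{x_i\})=1/q$ for each $i$. The argument then splits into two short pieces, one for each claim.

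For part (1), the key observation is that $\mu$-almost every point is one of the periodic points $x_i$. For any such $x_i$ and any $r>0$, the ball $B(x_i,r)$ contains $x_i$, so $\mu(B(x_i,r))\ge 1/q$. Hence $\log\mu(B(x_i,r))$ stays bounded below by $\log(1/q)$, while $\log r\to-\infty$ as $r\to 0$, which forces
\[
\frac{\log\mu(B(x_i,r))}{\log r}\longrightarrow 0.
\]
Thus $\underline{d}_{\mu}(x_i)=\overline{d}_{\mu}(x_i)=0$, and $d_{\mu}(x)=0$ for $\mu$-a.e.\ $x$.

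For part (2), I would invoke the standard definitions, under which $\dim_H\mu$ and the lower and upper box dimensions of $\mu$ are each bounded above by the corresponding dimension of any set $Z$ with $\mu(Z)=1$. Taking $Z=\{x_0,\dots,x_{q-1}\}$, a finite subset of $S^1$, one has $\dim_H Z=\ud_B Z=\od_B Z=0$ (indeed, $Z$ is covered by $q$ balls of arbitrary radius, so the covering counts are uniformly bounded). It follows that $\dim_H\mu$, $\ud_B\mu$, and $\od_B\mu$ are all $\le 0$; since each quantity is nonnegative, equality holds.

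There is no real obstacle here: the proposition is essentially immediate once the decomposition of ergodic invariant measures into uniform orbit measures is accepted, and the only work is unpacking definitions and noting that atoms of positive mass trivially give pointwise dimension zero.
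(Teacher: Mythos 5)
Your proof is correct and follows exactly the route the paper intends: the paper simply asserts that the proposition "immediately" follows from the fact that every ergodic invariant measure is the uniform measure on a periodic orbit, and your write-up supplies the (entirely routine) details — atoms force pointwise dimension zero, and a finite full-measure set forces all three measure dimensions to vanish via the definitions in \eqref{dim_mu}.
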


In contrast, diffeomorphisms with an irrational rotation number are 
uniquely ergodic.  In this case, the properties of the invariant measure 
depend on how well the irrational rotation number can be approximated 
by rational numbers. The numbers that can not be rapidly approximated 
by rationals are called Diophantine.

\begin{definition} A number $\tau$ is called  {\em Diophantine} 
if  there exist $\delta>0$ and $K>0$ such that 
\begin{equation}\label{dioph}
    | \,\tau - p/q\, | \,>\, K / |q|^{2+\delta}\;\text{ for any integers }
    p\text{ and } q.
\end{equation}
\end{definition}

Circle diffeomorphisms with Diophantine rotation numbers are
 smoothly conjugate to rotations. Therefore
the invariant measure for such
a diffeomorphism is equivalent to the Lebesgue measure
and hence  has the same dimensional properties.

\begin{proposition} \label{Diophantine}
Let $f$ be a $C^\infty$ circle homeomorphism with a Diophantine 
rotation number. Then for its unique invariant measure $\mu$,
    \begin{enumerate}
        \item $d_{\mu}(x)=1$ for every $x$ in $S^1$,
        \item $\dim_H \mu =\ud_{B\,} \mu = 
                   \od_{B\,} \mu=1.$    
\end{enumerate}
\end{proposition}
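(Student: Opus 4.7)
The plan is to reduce the statement to the corresponding (trivial) statement for the rigid rotation $R_\tau(x) = x + \tau \pmod 1$ by invoking Herman's theorem, which asserts that a $C^\infty$ circle diffeomorphism $f$ with Diophantine rotation number $\tau$ is smoothly conjugate to $R_\tau$. That is, there is a $C^\infty$ diffeomorphism $h : S^1 \to S^1$ such that $h \circ f = R_\tau \circ h$. Since $R_\tau$ preserves Lebesgue measure $m$ and is uniquely ergodic, the unique $f$-invariant probability measure is $\mu = h_*^{-1} m$, i.e.\ $\mu(A) = m(h(A))$ for every Borel set $A$.

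Next I would exploit the regularity of $h$. Because $S^1$ is compact and $h \in C^\infty$, both $h$ and $h^{-1}$ are bi-Lipschitz: there exist constants $0 < c \le C < \infty$ with
\[
  c\,|x-y| \;\le\; |h(x)-h(y)| \;\le\; C\,|x-y| \qquad \text{for all } x,y \in S^1.
\]
Consequently $h(B(x,r)) \subset B(h(x), Cr)$ and $h(B(x,r)) \supset B(h(x), cr)$, which yields
\[
  2cr \;\le\; \mu(B(x,r)) \;=\; m(h(B(x,r))) \;\le\; 2Cr
\]
for every $x \in S^1$ and every sufficiently small $r > 0$. Taking $\log/\log r$ and letting $r \to 0$ gives $d_\mu(x) = 1$ uniformly in $x$, establishing (1).

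For (2), the same two-sided estimate shows that $\mu$ is equivalent to Lebesgue measure with density bounded above and below by positive constants. Standard pointwise-dimension criteria (summarized in Section \ref{Hausdorff, box}) then imply that $\dim_H \mu$, $\ud_B\, \mu$, and $\od_B\, \mu$ all equal the common pointwise dimension, which is $1$. Alternatively, one can argue directly: since $\mu$ is absolutely continuous with bounded density, any full-measure set has Hausdorff and box dimension equal to that of $S^1$, namely $1$.

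There is no real obstacle here beyond citing Herman's theorem correctly; the only mild subtlety is that the conjugacy theorem is nontrivial and requires the Diophantine condition \eqref{dioph}, which is precisely why this proposition fails in the Liouville regime treated later. Once smooth conjugacy is granted, the argument is a short push-forward computation.
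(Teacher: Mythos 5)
Your proposal is correct and follows essentially the same route as the paper: invoke Herman's linearization theorem to get a smooth (in the paper, at least $C^1$) conjugacy $h$ to the rotation, deduce the two-sided estimate $2cr \le \mu(B(x,r)) \le 2Cr$ from the bi-Lipschitz bounds on $h$, conclude $d_\mu(x)=1$ everywhere, and then obtain (2) from Theorem \ref{Young}. No substantive differences.
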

\newpage

The most interesting case is that of Liouville rotation numbers.
These are irrational numbers that can be rapidly approximated 
by rationals, more precisely:

\begin{definition}
An irrational number  $\tau$ is called a {\em Liouville number} if 
for any $n\ge 1$ there exist integers $p$ and $q$, $\,q>1$, such that 
\begin{equation}\label{liouville}
 |\, \tau - p/q\, |< 1/ q^n.
\end{equation}
\end{definition}

Clearly, an irrational number is Liouville if and only if it is not Diophantine. 
The set  of all Liouville numbers is a dense $G_\delta$ set in $\R$, and 
it has zero Lebesgue measure.
Our main result, Theorem 1.5,  shows that in the case of
a Liouville rotation number different types of dimensional properties
of the invariant measure are realized. In particular, pointwise and 
box dimensions may not exist, which is in contrast to the Diophantine 
case as well as to the case of hyperbolic measures. 

\begin{theorem}\label{main} 
 Let $\tau$ be a Liouville number and let $0\le\beta \le 1$.
There exists a $C^\infty$ circle diffeomorphism $f$ with rotation 
number $\tau$ such that for its unique  invariant measure $\mu$,
    \begin{enumerate}
        \item $\underline{d}_{\mu}(x)=\beta$ and 
        $\,\overline{d}_{\mu}(x)=1$ 
           for $\mu$-almost every $x$ in $S^1,$
        \item $\dim_H \mu =\ud_{B\,} \mu =\beta$ and 
            $\;\od_{B\,} \mu =1.$ 
    \end{enumerate}
 \end{theorem}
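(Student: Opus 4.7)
\medskip

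\textbf{Proof proposal.} The natural approach is the Anosov--Katok fast approximation by conjugation, adapted so that the resulting invariant measure exhibits different scaling at different scales. Since $\tau$ is Liouville, we may choose rationals $\alpha_n = p_n/q_n \to \tau$ that converge as fast as we wish. Inductively build $H_n = h_n \circ H_{n-1}$ with $H_0 = \mathrm{Id}$, where each $h_n \in \mathrm{Diff}^\infty(S^1)$ commutes with $R_{\alpha_{n-1}}$ (i.e., is $1/q_{n-1}$-periodic). Set $f_n = H_n \circ R_{\alpha_n} \circ H_n^{-1}$. Because $h_n$ commutes with $R_{\alpha_{n-1}}$, one has
\[
   f_n = H_{n-1} \circ h_n \circ R_{\alpha_n} \circ h_n^{-1} \circ H_{n-1}^{-1} = f_{n-1} \;+\; O\bigl(\|H_{n-1}\|_{C^{n+1}} \cdot |\alpha_n - \alpha_{n-1}|\bigr)
\]
in $C^n$; choosing $q_n$ large enough (permissible by the Liouville hypothesis) ensures $f_n \to f$ in $C^\infty$, and the rotation number of $f$ is $\tau$. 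The invariant measure for $f_n$ is $\mu_n = (H_n)_* \mathrm{Leb}$, and since $f$ is uniquely ergodic, $\mu_n \to \mu$ weakly.

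The creative content is the choice of $h_n$. At each stage we design $h_n$ to be a $C^\infty$ diffeomorphism that, on each fundamental domain of $R_{\alpha_{n-1}}$, sends a very short interval $I_n$ of length $\ell_n$ onto a much longer interval $J_n$ of length $L_n$ (with $L_n \sim 1/q_n$), chosen so that $\log \ell_n / \log L_n \to \beta$. This is a standard explicit construction (cutoff of a bi-Lipschitz model, smoothed out), and it pushes a Lebesgue mass $\ell_n$ into the $L_n$-neighborhood of every orbit segment of length $1/q_{n-1}$. Consequently $\mu(B(x, L_n)) \approx \ell_n \approx L_n^\beta$ for $\mu$-a.e.\ $x$, which will yield $\underline d_\mu(x) \le \beta$ and $\underline{\dim}_B \mu \le \beta$. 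Conversely, at intermediate scales $r$ with $L_{n+1} \ll r \ll L_n$, the structure of $H_n$ ensures that the measure $\mu$ is essentially proportional to Lebesgue (up to the factor $\ell_n/L_n$), giving $\mu(B(x,r)) \approx r$ on those scales, hence $\overline d_\mu(x) \ge 1$ and $\overline{\dim}_B \mu \ge 1$. The matching upper bound $\overline d_\mu(x) \le 1$ is automatic on $S^1$.

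For the Hausdorff lower bound $\dim_H \mu \ge \beta$ we would invoke the standard mass-distribution / Frostman-type criterion: it suffices to show $\underline d_\mu(x) \ge \beta$ for $\mu$-a.e. $x$, which amounts to the estimate $\mu(B(x,r)) \le C_\varepsilon \, r^{\beta-\varepsilon}$ for all small $r$ and $\mu$-typical $x$. This is the most delicate step, because one must rule out \emph{further} concentration of $\mu$ beyond the designed concentration at scale $L_n$. One handles this by (i) making the distortion of $h_n$ bounded outside $I_n$, (ii) requiring $\ell_{n+1}$ to be chosen relative to $L_n$ so that the cumulative concentration factor at scale $r$ does not exceed $r^{\beta-\varepsilon}$, and (iii) using the fact that on the set of $\mu$-typical $x$, the orbit hits the ``concentration intervals'' with the ergodic frequency dictated by $\ell_n/L_n$, which contributes the correct exponent.

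\textbf{Main obstacle.} The heart of the argument is calibrating the three infinite sequences $(q_n)$, $(\ell_n)$, $(L_n)$ so that simultaneously (a) the $C^\infty$ convergence estimate $\|f_n - f_{n-1}\|_{C^n} \to 0$ holds, (b) the scales $L_n$ produce the desired concentration $\mu(B(x,L_n)) \asymp L_n^\beta$ at $\mu$-almost every $x$ (not merely at atypical points), and (c) no \emph{additional} unplanned concentration appears at other scales, so that the lower pointwise dimension is exactly $\beta$ rather than something smaller. The Liouville hypothesis on $\tau$ is exactly what decouples (a) from (b)--(c): since $|\tau - \alpha_n|$ may be made arbitrarily small relative to the required size of $\|h_n\|_{C^n}$, the smoothness budget never conflicts with the measure-theoretic design. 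Handling the boundary cases $\beta=0$ and $\beta=1$ requires minor adjustments (limiting rate of concentration, or no concentration at all).
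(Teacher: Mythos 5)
Your strategy is essentially the paper's: a fast Anosov--Katok approximation $f_n=H_n\circ R_{\alpha_n}\circ H_n^{-1}$ with $h_n$ periodic of period a fundamental domain of $R_{\alpha_{n-1}}$ so that the conjugacies telescope, convergence in $C^\infty$ bought by the Liouville condition, a squeezing map on each fundamental domain calibrated so that the measure scales like $r^\beta$ along one sequence of radii and like $r$ along another, and a Frostman-type lower bound for $\dim_H\mu$. Two points, however, keep this from being a proof rather than a plan. First, the description of $h_n$ is internally inconsistent: if $I_n$ is ``very short'' and $J_n$ ``much longer,'' then $\ell_n<L_n<1$ forces $\log\ell_n/\log L_n>1$, so it cannot tend to $\beta<1$; moreover, with $\mu_n=(H_n)_*\mathrm{Leb}$ the set $\bigcup J_n$ would then carry total mass about $q_{n-1}\ell_n\ll 1$, so the claimed estimate $\mu(B(x,L_n))\approx\ell_n$ would hold only on a set of \emph{small} measure, not for $\mu$-a.e.\ $x$. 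You have the roles of $H$ and $H^{-1}$ (equivalently, of the short and long intervals) reversed; in the correct setup almost all of the mass of each fundamental domain is pushed onto a short subinterval, and one must check that the union of these short intervals has measure tending to $1$, which is exactly the paper's estimate $\mu(E_n)\ge 1-5/2^{n\sigma}$. Second, the step you correctly single out as the heart of the matter --- ruling out unplanned extra concentration so that $\underline{d}_\mu(x)\ge\beta$ exactly --- is only named, not carried out. The paper resolves it by proving, via a four-case induction over the scales $s_{n-1}$, $s_n$, $r_n$, that the limiting conjugacy $h$ is H\"older of exponent $\beta$ (its Lemma 3.5), which requires the extra conditions $s_n^{\gamma}\le\min\{1/(M_n+1)^n,(m_n/2M_n)^n\}$ built into the inductive choice of $q_n$; some such quantitative mechanism must be supplied for your items (i)--(iii) to become an argument. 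The remaining ingredients (upper pointwise dimension $1$ from the Lipschitz bound on $h_n$ at scales where later corrections are negligible, and the covering argument giving $\underline{\dim}_B\mu\le\beta$ from the sets $\bigcap_{n\ge k}E_n$) match the paper and are routine once the calibration is fixed.
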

 
 It is an interesting open question whether there exists a circle 
 diffeomorphism with irrational rotation number
 whose invariant measure has upper pointwise dimension less than 1
 on a set of positive measure.
 \vskip.1cm

Our constructions are based on a method developed by  Anosov 
and  Katok in \cite{AK} to produce examples of diffeomorphisms 
with specific ergodic properties. In \cite{KS} we used this method
to construct  examples of diffeomorphisms satisfying (1) and (2)
of Theorem \ref{main}.  However,  the qualitative nature of the 
arguments did not allow us to construct  examples for a given 
rotation number, or even describe explicitly the rotation numbers 
in our examples. 
In this paper we use some ideas developed in \cite{FS,FSW} to
make an explicit construction with specific quantitative estimates. 
This allows us to produce the examples for {\em all}$\,$ 
Liouville rotation numbers.

\vskip.1cm

We note that any $C^2$ circle diffeomorphism $f$ with irrational 
rotation number is topologically conjugate to the corresponding rotation. 
The conjugacy gives the distribution function of the invariant measure $\mu$. 
In the theorem above, $\mu$ is singular for $\beta <1$, and so is the conjugacy.
Thus the theorem implies that for any Liouville rotation number there exists a diffeomorphisms with singular conjugacy. Similar methods may be used
to construct diffeomorphisms with specific degree of regularity of the
conjugacy for any Liouville rotation number.

%%%%%%%%%%%%%%%%%%%%%%%%%%%%%
%%%%%%%%%%      Preliminaries   %%%%%%%%%%
%%%%%%%%%%%%%%%%%%%%%%%%%%%%%
\newpage

\section{preliminaries} \label{preliminaries} 

\subsection{Rotation number of a circle homeomorphism} 
\label{rotation}
(See \cite{KH} for more details.)$\;$ \\
Let $f$ be an orientation-preserving homeomorphism of $S^1$,
let $\pi: \R\to S^1=\R/ \Z$ be the natural projection, and let $F$
be  a homeomorphism of $\R$ such that $f\circ \pi = \pi \circ F$.  
Then the following limit exists and has the same value for all $x$:
$$
  \tau (F)=\lim_{|n| \to \infty} \text{\footnotesize $\frac{1}n$}
   \left( F^n(x)-x \right). 
 $$ 
The number $\tau(f)=\pi(\tau(F))$ is called the {\em rotation number} of $f$.
If $h:S^1 \to S^1$ is a homeomorphism,
then $\tau(h^{-1}\circ f \circ h)=\tau(f)$. In particular, if
 $f$ is topologically conjugate to a rotation by $\tau$
then $f$  has rotation number $\tau$. 

%%%%%%%%%%%%%%%%%%%%%%%%%%%%

 \subsection{Hausdorff and box dimensions of sets and  measures.} 
 \label{Hausdorff, box} 
 (See \cite{P} for more details.) 
 The {\em upper and lower box dimensions}\, of a set 
$Z \subset \R^k$ are defined as 
 $$
  \od_{B\,} Z= \limsup_{\e\to 0}
   \frac{\log N(Z,\e)}{\log (1/\e)} \quad  \text { and } \quad
   \ud_{B\,} Z= \liminf_{\e\to 0}  \frac{\log N(Z,\e)}{\log (1/\e)},        
 $$
where $N(Z,\e)$ is  the least number 
of balls of diameter $\e$ needed to cover $Z$.

For a number $\alpha\geq 0$, the $\alpha$-Hausdorff measure of $Z$ is 
 $$
  m_H(Z,\alpha)= \lim_{\e\to 0}\,
  \inf_{\EuScript{G}} {\sum}_{U\in\EuScript{G}}(\diam \,U)^\alpha, 
 $$
where the infimum is taken over all finite or countable coverings 
$\EuScript{G}$ of $Z$ by open sets with diameter at most $\e$. 
The {\em Hausdorff dimension}\, of $Z$  is 
 $$
  \dim_H Z=\inf\,\{ \alpha :\;m_H(Z,\alpha)=0 \} =
  \sup\,\{\alpha :\; m_H(Z,\alpha)=\infty \}.   
 $$

The  Hausdorff and upper and lower box dimensions of a  Borel probability
measure $\mu$ are defined as follows:
 \begin{equation} \label{dim_mu}
      \aligned
      \dim_H\mu &=  \inf\, \{\, \dim_H Z: \; \mu(Z)= 1\, \},  \\  
      \ud_{B\,}\mu &= \lim_{\varepsilon \to 0} \, \inf \,    
         \{\, \ud_{B\,} Z: \; \mu(Z)>1-\varepsilon \, \},   \\  
      \od_{B\,}\mu &= \lim_{\varepsilon\to 0} \,\inf \,
         \{\, \od_{B\,} Z: \; \mu(Z)>1-\varepsilon \,\}.
      \endaligned
\end{equation}
It is known that 
$\dim_H \mu \leq \ud_{B\,} \mu \leq \od_{B\,} \mu$.
 \vskip.2cm
  
The following result by L.-S. Young  \cite{Y}
shows how the pointwise dimension of a measure 
can be used to estimate its  box and Hausdorff dimensions.

\begin{theorem}   \label{Young}
Let $\mu$ be a Borel finite measure on $\R^m$. Then
\begin{enumerate}
\item If $\,\underline{d}_\mu(x) \ge d$ for $\mu$-almost every $x$
        then $\,\dim_H \mu \ge d$;
\item If $\,\overline{d}_\mu(x) \le d$ for $\mu$-almost every $x$
        then $\,\od_{B\,} \mu \le d$;
\item If  $\,\underline{d}_\mu(x) = \overline{d}_\mu(x) = d\,$
        for $\mu$-almost every $x,\,$ then \\
        $\dim_H \mu = \ud_{B\,} \mu = 
        \od_{B\,} \mu =d$.
\end{enumerate}

\end{theorem}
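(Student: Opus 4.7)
The plan is to prove (1) and (2) separately and derive (3) as an immediate consequence: since $\dim_H \mu \leq \ud_{B\,} \mu \leq \od_{B\,} \mu$ in general, the lower bound $\dim_H\mu \geq d$ from (1) combined with the upper bound $\od_{B\,}\mu \leq d$ from (2) forces all three dimensions to equal $d$. Both (1) and (2) rest on the same preliminary construction. Given $\epsilon>0$ and $\eta>0$, an Egorov-type exhaustion produces a measurable set $A=A_{\epsilon,\eta}$ with $\mu(A)>1-\eta$ and a uniform radius $r_0>0$ upgrading the pointwise hypothesis to a uniform estimate: $\mu(B(x,r))\leq r^{d-\epsilon}$ for all $x\in A$ and $r<r_0$ in case (1), and $\mu(B(x,r))\geq r^{d+\epsilon}$ in case (2).

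For part (1) I would invoke the mass distribution principle. Let $Z$ be any Borel set with $\mu(Z)=1$ and let $\EuScript{G}$ be any cover of $Z$ by sets of diameter at most $\delta<r_0$. For each $U\in\EuScript{G}$ meeting $Z\cap A$, pick a point $x_U\in U\cap A$; then $U\subset B(x_U,\diam U)$, so $\mu(U)\leq (\diam U)^{d-\epsilon}$. Summing yields
$$
\mu(Z\cap A)\;\leq\;\sum_{U\in \EuScript{G}} (\diam U)^{d-\epsilon},
$$
hence $m_H(Z,d-\epsilon)\geq 1-\eta>0$ and $\dim_H Z\geq d-\epsilon$. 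Taking the infimum over $Z$ with $\mu(Z)=1$ and then sending $\epsilon\to 0$ gives $\dim_H\mu\geq d$.

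For part (2) I would use a standard volume-packing estimate. Fix $\e$ with $2\e<r_0$ and pick a maximal $\e$-separated subset $\{x_1,\dots,x_N\}$ of $A$. The balls $B(x_i,\e/2)$ are pairwise disjoint, so
$$
N\,(\e/2)^{d+\epsilon}\;\leq\;\sum_{i=1}^N \mu(B(x_i,\e/2))\;\leq\;1,
$$
giving $N\leq C_\epsilon\,\e^{-(d+\epsilon)}$. By maximality the balls $B(x_i,\e)$ cover $A$, so $N(A,2\e)\leq C_\epsilon\,\e^{-(d+\epsilon)}$ and thus $\od_{B\,}A\leq d+\epsilon$. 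Because $\mu(A)>1-\eta$, the definition of $\od_{B\,}\mu$ in \eqref{dim_mu} yields $\od_{B\,}\mu\leq d+\epsilon$, and sending $\epsilon\to 0$ finishes (2).

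The main technical point is the Egorov-type exhaustion that converts the pointwise estimates (where each $x$ has its own threshold $r_0(x)$) into uniform estimates on a set of measure close to $1$; once this is in place, both the mass-distribution argument for (1) and the packing argument for (2) proceed routinely, and (3) is immediate from the sandwich $\dim_H \mu \leq \ud_{B\,}\mu \leq \od_{B\,}\mu$.
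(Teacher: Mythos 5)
Your proof is correct, but note that the paper itself offers no proof of this statement to compare against: Theorem \ref{Young} is quoted verbatim as a known result of L.-S. Young \cite{Y} and used as a black box. Your argument is the classical one for that result: an Egorov-type exhaustion upgrades the pointwise hypotheses to uniform bounds $\mu(B(x,r))\le r^{d-\epsilon}$ (resp. $\ge r^{d+\epsilon}$) for all $r<r_0$ on a set $A$ with $\mu(A)>1-\eta$; the mass distribution principle then gives $m_H(Z,d-\epsilon)\ge 1-\eta$ for every $Z$ of full measure, and the disjoint-ball packing count gives $\od_{B\,}A\le d+\epsilon$; part (3) follows from the chain $\dim_H\mu\le \ud_{B\,}\mu\le\od_{B\,}\mu$ recorded in Section \ref{Hausdorff, box}. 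Two small points to make explicit in a write-up: in (1), with open balls the inclusion $U\subset B(x_U,\diam U)$ can fail at boundary points, so use $B(x_U,2\,\diam U)$ instead (the extra factor $2^{d-\epsilon}$ is harmless); and in (2), the same packing estimate shows that any $\e$-separated subset of $A$ is finite, hence $A$ is totally bounded, which is needed for $N(A,\e)$ to be finite and for $\od_{B\,}A$ to be well defined. One should also record why the sets in the exhaustion are measurable (lower semicontinuity of $x\mapsto\mu(B(x,r))$ plus restriction to rational $r$), but these are routine and do not affect the validity of the argument.
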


%%%%%%%%%%%%%%%%%%%%%%%%%%%%%%
%%%%%%%%%%%%%   Proofs   %%%%%%%%%%%%
%%%%%%%%%%%%%%%%%%%%%%%%%%%%%%

\section{Proofs}  \label{proofs}

%%%%%%%%%%%%%%%%%%%%%%%%%%%%%%

Throughout this paper we will identify the unit circle $S^1=
\R/\Z$ with the interval $[0,1]$.
Let $\tau$ be an irrational number and let $R_\tau$ be the 
rotation by $\tau$. The Lebesgue measure is the
only measure preserved by $R_\tau.$
Suppose that  
    $$f=h^{-1} \circ R_\tau \circ h,
    $$
 where $h$ is a homeomorphism. Then the unique invariant measure $\mu$ 
for $f$ is the push-forward of the Lebesgue measure 
$\lambda$ by $h^{-1},\,$  i.e. $\mu(A)=\lambda(hA)$. 
This means that  $h$ is the distribution function for $\mu$, i.e. 
$$
    \mu([x_1,x_2))=h(x_2)-h(x_1)
$$ 
for any interval $[x_1,x_2) \subset S^1$, and hence
    \begin{equation} \label{distribution}
    \mu(B(x,r))=\,\Delta h(x,r)\, \overset{\text{def}}{=} \,h(x+r)-h(x-r). 
   \end{equation}
In particular, if $h$ is continuously differentiable, 
then for any $x$ and $r<1/2$,
$$
   2r\cdot \underset{[x-r,x+r]}{\min} |h'|  \;\le\; \mu(B(x,r)) \;\le\; 
   2r\cdot \underset{[x-r,x+r]}{\max} |h'|.
$$

%%%%%%%%%%%%%%%%%%%%%%%%%%%%%%

\subsection{Proof of Proposition \ref{Diophantine}}\;
The following result was established by M.-R. Herman in \cite{H}:
any $C^{2+\varepsilon}$ circle diffeomorphism  whose  rotation number $\tau$ satisfies the Diophantine condition \eqref{dioph}
with some 
$K>0$ and $0<\delta <\varepsilon$  is conjugate to the rotation  
$R_\tau$ via a $C^1$ diffeomorphism. It follows that any $\Ci$
diffeomorphism with a Diophantine rotation number is smoothly 
conjugate to the corresponding rotation. This implies that there 
exist constants $m$ and $M$ such that 
$2mr\le \mu(B(x,r)) \le 2M r$ for all $x$ and all $r< 1/2$.
Therefore, $d_\mu(x)=1$ for every  $x\in S^1$, and hence
$\,\dim_H \mu =\ud_{B\,} \mu =\od_{B\,} \mu =1\,$
by Theorem \ref{Young}.   
$\QED$

%%%%%%%%%%%%%%%%%%%%%%%%%%%%%%

\subsection{Proof of Theorem \ref{main}} 
Let $\tau$ be a Liouville number. First we note that to obtain the 
result for the case of $\beta=1$ it suffices to take 
$f=R_\tau$. The rotation $R_\tau$ preserves the Lebesgue measure,
which satisfies (1) and (2).
From now on we will assume that $0\le \beta <1$.
\vskip.1cm

We will obtain  the diffeomorphism $f$ as  a limit of a sequence 
of diffeomorphisms
$$
     f_n=h_n^{-1} \circ R_{\tau_n} \circ h_n,
$$
where  $h_n$ are $C^\infty$ diffeomorphisms of $S^1$ 
and $\tau_n$ are rational numbers that converges 
to $\tau$.  The sequences $\{h_n\}$ and $\{\tau_n\}$ will be defined inductively. Once $h_n$ is selected,
we will construct $h_{n+1}$ in the form
$$
  h_{n+1}=A_n \circ h_n,\;\; \text{ where }\;A_n=\text{Id}+a_n
$$
is a diffeomorphism and  $a_n$ is a $C^\infty$ periodic function.

The diffeomorphisms $f_n$ will converge in the 
$C^\infty$ topology,  and $h_n$ as well as $h_n^{-1}$
 will converge in $C^0$. The homeomorphism
 $h=\lim_{n\to \infty}h_n$ will give the distribution function 
 of the invariant measure $\mu$ for $f$.

\vskip.2cm
     
We will use the following  norm of diffeomorphisms 
 and the corresponding distance.

\begin{definition}\label{norm_def}
Let $g$ be a $C^n$  diffeomorphism of $\,[0,1]$. We define a norm of $g$
$$
   \|g\| _n^*= \max |g^{(i)}(x)|,
$$
where the maximum is taken over all $x$  and $\,0\le i \le n$,
and we denote
$$
      \|g\|_n=\max\, \{ \, \|g\|_n^*,\;  \|g^{-1}\|_n^* \,\}.
 $$
For two $C^n$  diffeomorphisms  $g_1$  and $g_2$, we set 
$$
 d_n(g_1, g_2)=\max \,\{ \,\|g_1-g_2\|_n^*,\;  \|g_1^{-1}-g_2^{-1}\|_n^*\,\}.
$$
\end{definition}
\vskip.2cm

In the three lemmas below
we will use  Fa\`a di Bruno's formula, which generalizes 
the chain rule to higher derivatives:
\begin{equation}\label{Bruno}
   \frac{d^n}{dx^n}\,f(g(x))= \sum c_{m_1,\dots, m_n}
   \,f^{(m_1+\dots +m_n)}(g(x)) 
   \prod_{j=1}^n \left(g^{(j)}(x)\right) ^{m_j},
\end{equation}
where the constants 
$c_{m_1,\dots, m_n}$ depend only on $m_1, \dots, m_n$,    
and the sum is taken over all $n$-tuples 
$(m_1, \dots ,m_n)$  of integers satisfying
\begin{equation}\label{m}
   1m_1+2m_2+ \dots +nm_n=n \quad\text{and}\quad 
   m_i\ge 0, \;\;i=1, \dots, n.
\end{equation}
\vskip.2cm

The following  lemma gives an estimate for the distance 
between two maps conjugate to two rotations via the 
same diffeomorphism.

\begin{lemma}\label{two rotations lemma}
Let $R_{\tau_1}$ and $R_{\tau_2}$ be two circle rotations, and let 
$h$ be a $C^{n+1}$ circle diffeomorphism. Then
\begin{equation}\label{two rotations}
  d_n( h^{-1} \circ R_{\tau_1} \circ h,\;  h^{-1} \circ R_{\tau_2} \circ h) 
  \;\le\; c_n  |\tau_1-\tau_2| \cdot  \|h\|_{n+1} ^{n+1},
\end{equation}
where the constant $c_n$ depends only on $n$.
\end{lemma}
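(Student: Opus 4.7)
The plan is to write $g_i = h^{-1} \circ R_{\tau_i} \circ h$, which in coordinates is simply $g_i(x) = h^{-1}(h(x) + \tau_i)$. The main observation is that for $i=1,2$ the two maps have the same outer function $h^{-1}$ and the same inner function $h$, with only the shift differing, so every term obtained by differentiating via Fa\`a di Bruno's formula \eqref{Bruno} appears twice with the same product of derivatives of $h$ and only the derivative of $h^{-1}$ is evaluated at different points. This structural symmetry is what should turn the mere presence of $|\tau_1-\tau_2|$ into the global estimate.

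Concretely, I would first apply \eqref{Bruno} to each $g_i$ to write
\begin{equation*}
  g_i^{(n)}(x)=\sum c_{m_1,\dots,m_n}\,(h^{-1})^{(m)}\!\bigl(h(x)+\tau_i\bigr)\prod_{j=1}^n\bigl(h^{(j)}(x)\bigr)^{m_j},
\end{equation*}
with $m=m_1+\cdots+m_n$ and the sum subject to \eqref{m}. Subtracting and pulling out the common product of derivatives of $h$, only the factor $(h^{-1})^{(m)}(h(x)+\tau_1)-(h^{-1})^{(m)}(h(x)+\tau_2)$ remains. The mean value theorem bounds this by $|\tau_1-\tau_2|\cdot\|(h^{-1})^{(m+1)}\|_\infty$, which, since $m\le n$, is at most $|\tau_1-\tau_2|\cdot\|h\|_{n+1}$. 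The remaining product is estimated by $\prod_{j=1}^n\|h\|_n^{m_j}=\|h\|_n^{m_1+\cdots+m_n}\le \|h\|_n^{n}$, using the crucial arithmetic fact that $m_1+\cdots+m_n\le m_1+2m_2+\cdots+nm_n=n$. Combining and summing over the finitely many admissible tuples produces a bound of the form $c_n|\tau_1-\tau_2|\cdot\|h\|_{n+1}^{\,n+1}$ for $\|g_1-g_2\|_n^*$, taking the max over $0\le i\le n$ (the cases $i<n$ being analogous or weaker).

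To handle the inverse part of $d_n$, I would note that $g_i^{-1}=h^{-1}\circ R_{-\tau_i}\circ h$, so the identical argument with $-\tau_i$ in place of $\tau_i$ gives the same bound on $\|g_1^{-1}-g_2^{-1}\|_n^*$, since $|(-\tau_1)-(-\tau_2)|=|\tau_1-\tau_2|$ and $\|h^{-1}\|_{n+1}\le\|h\|_{n+1}$ by the definition of $\|\cdot\|_{n+1}$.

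I do not anticipate a real obstacle here; the lemma is essentially a bookkeeping exercise. The one spot that requires care is checking that the various factors combine to give exactly the exponent $n+1$ on $\|h\|_{n+1}$: one power comes from the $(n{+}1)$-th derivative of $h^{-1}$ produced by the mean value theorem, and $n$ further powers come from the product $\prod(h^{(j)})^{m_j}$, via the constraint $\sum m_j\le n$ from \eqref{m}. Once this accounting is in place, absorbing the finitely many combinatorial coefficients $c_{m_1,\dots,m_n}$ into a single constant $c_n$ depending only on $n$ completes the estimate \eqref{two rotations}.
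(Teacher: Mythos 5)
Your proposal is correct and follows essentially the same route as the paper: expand both conjugates via Fa\`a di Bruno, exploit that the products of derivatives of $h$ coincide so only the outer derivative of $h^{-1}$ differs, apply the mean value theorem to extract $|\tau_1-\tau_2|$, use $m_1+\cdots+m_n\le n$ to get the exponent $n+1$, and handle the inverses via $(h^{-1}\circ R_\tau\circ h)^{-1}=h^{-1}\circ R_{-\tau}\circ h$. No gaps.
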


\begin{proof} We will estimate 
  $\max | \frac{d^k}{dx^k}\, (h^{-1} \circ R_{\tau_1} \circ h)-
           \frac{d^k}{dx^k}\, (h^{-1} \circ R_{\tau_2} \circ h) |$
           for $0\le k \le n$.
For $k=0$ we have
$$
  \max |h^{-1}(h(x)+\tau_1)-h^{-1}(h(x)+\tau_2)| 
  \,\le \,\max |(h^{-1})'|\cdot |\tau_1-\tau_2| 
      \, \le\, \|h\|_1 \cdot |\tau_1-\tau_2|.
$$ 
 For $1\le k\le n\,$ formula \eqref{Bruno} yields
$$
  \begin{aligned}
  & \frac{d^k}{dx^k}\, (h^{-1} \circ R_\tau \circ h)(x)= 
    \frac{d^k}{dx^k} \left( h^{-1} (h(x)+\tau) \right)\\
   &\;=\;\sum c_{m_1, \dots ,m_k} 
   \,(h^{-1})^{(m_1+\dots +m_k)}(h(x)+\tau ) 
   \prod_{j=1}^k \left(h^{(j)}(x)\right) ^{m_j}.
\end{aligned}
$$
We estimate the difference between the corresponding terms
in $\frac{d^k}{dx^k}\, (h^{-1} \circ R_{\tau_1} \circ h)(x)$ and 
$\frac{d^k}{dx^k}\, (h^{-1} \circ R_{\tau_2} \circ h)(x).\;$
By \eqref{m}, $\;m_1+\dots + m_k \le k$, and we have
$$
\begin{aligned}
& \left| \,\prod_{j=1}^k \left(h^{(j)}(x)\right) ^{m_j} 
 \left( (h^{-1})^{(m_1+\dots +m_k)}(h(x)+\tau_1)
 -(h^{-1})^{(m_1+\dots +m_k)}(h(x)+\tau_2) \right) \right|  \\
 &\;\le\; \max \left| \,\prod_{j=1}^k \left(h^{(j)}(x)\right) ^{m_j} \right|
 \cdot \max |(h^{-1})^{(m_1+\dots +m_k+1)}|
 \cdot  |\tau_1-\tau_2| \\
  & \;\le\;  \|h\|_k^k  \cdot \|h\|_{k+1} \cdot |\tau_1-\tau_2| 
\;\le\;
 \|h\|_{k+1}^{k+1}\cdot |\tau_1-\tau_2| .
 \end{aligned}
  $$
It follows that for any $0\le k\le n$,
$$
  \max \left| \frac{d^k}{dx^k}\, (h^{-1} \circ R_{\tau_1} \circ h)(x)-
           \frac{d^k}{dx^k}\, (h^{-1} \circ R_{\tau_2} \circ h)(x)\right|
           \;\le \; c_n  |\tau_1-\tau_2| \cdot  \|h\|_{n+1} ^{n+1},
$$
where $c_n$ is the sum of the coefficients $c_{m_1\dots m_n}$
 in \eqref{Bruno}. 
Since  $\left( h^{-1} \circ R_{\tau} \circ h \right)^{-1}= 
 h^{-1} \circ R_{-\tau} \circ h,\;$ 
we have the same estimate  for the inverses of the functions,
and \eqref{two rotations} follows.
\end{proof}

 \vskip.2cm
 
%%%%%%%%%%%%%%%%%%%%%%%%%%
When constructing the diffeomorphism $h_{n+1}=A_n \circ h_n$,
we will  choose the function $A_n$ in a specific form, 
as in the following lemma.

\begin{lemma}\label{a_s lemma}
Let $s$ and $\d$ be positive numbers such that $1/s$ is an integer and $\d<s/2$. Then there exists a $C^\infty$ diffeomorphism
$A=A_{s,\d}$ of $\,[0,1]$  such that 
\vskip.1cm
\begin{enumerate}
\item $A=\Id+a$, where $a=a_{s,\d}$ is a  non-negative $C^\infty$
function of period $s$,
\item  $A(0)=0$,  $\;A(\d)=s-\d$, and $\;A(s)=s$,
\vskip.1cm
\item $\d/(2s) \le A'(x)\le 2s/\d\;$ for all $x$,
\vskip.1cm
\item for each $n\ge 0$ there exists a constant $\rho_n$ 
that does not depend on $\d$ and $s$ such that 
$\; \|A\|_n \le \rho_n /\d^{n^2}$.
\end{enumerate}
\end{lemma}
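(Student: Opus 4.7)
The plan is to build $A = \Id + a$ by prescribing its derivative $\psi := A'$ as a smooth positive $s$-periodic function and setting $A(x) := \int_0^x \psi(t)\,dt$. The model is the piecewise linear diffeomorphism sending $0 \mapsto 0$, $\d \mapsto s-\d$, $s \mapsto s$, whose slopes are $\a := (s-\d)/\d$ on $(0,\d)$ and $\beta := \d/(s-\d)$ on $(\d,s)$. I fix once and for all a universal $\Ci$ mollifier $\phi\ge 0$ supported in $[-1,1]$ with $\int\phi=1$, and construct $\psi$ by smoothly interpolating (via $\phi$ rescaled to length comparable to $\d$) between the two plateau values $\a$ and $\beta$ across the two transition points $\d$ and $0\equiv s$. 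Small free corrections added to the plateau values are then solved from the two linear constraints $\int_0^\d\psi = s-\d$ and $\int_0^s\psi = s$, which enforce $A(\d)=s-\d$ and periodicity of $a = A-\Id$.

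Once such a $\psi$ is available with $\d/(2s)\le\psi\le 2s/\d$, properties (1)--(3) are immediate: $a$ is $\Ci$ and $s$-periodic by construction; $a\ge 0$ because $a$ is strictly increasing on $[0,\d]$ (where $\psi>1$) from $0$ up to $s-2\d$ and strictly decreasing on $[\d,s]$ (where $\psi<1$) back to $0$; and the pointwise bound (3) is built in.

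The content of the proof lies in (4). For $A$ itself one has $A^{(k)} = \psi^{(k-1)}$ for $k\ge 1$, and since $\psi$ is a $\phi$-mollification on a scale comparable to $\d$ of a function of amplitude $O(s/\d)$, iterated differentiation produces the bound $\|A^{(k)}\|_\infty \le C_k/\d^k$, which is well within $\rho_n/\d^{n^2}$ for $k\le n$. For $\|A^{-1}\|_n^*$, I will iteratively apply Fa\`a di Bruno's formula \eqref{Bruno} to the identity $A\circ A^{-1}=\Id$ in order to express $(A^{-1})^{(n)}$ as a finite sum of terms of the form $c\prod_j (A^{(k_j)}\circ A^{-1})/(A'\circ A^{-1})^m$ with each $k_j\le n$ and $m\le 2n-1$, and then combine the lower bound $A'\ge \d/(2s)\ge \d/2$ with the $A^{(k)}$ estimates above.

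The main obstacle is the bookkeeping in this last step. The exponent $n^2$ arises from tracking how the at most $n$ numerator factors $A^{(k_j)}$ (each contributing $C/\d^{k_j}$, with the sum $\sum k_j$ bounded by a quadratic function of $n$) combine with the denominator $(A')^m$, which contributes $(2s/\d)^m \le (2/\d)^{2n-1}$. A careful but direct enumeration of the Fa\`a di Bruno terms shows that the total power of $1/\d$ in $(A^{-1})^{(n)}$ is absorbed by $\rho_n/\d^{n^2}$ for a sufficiently large universal constant $\rho_n$, completing the verification.
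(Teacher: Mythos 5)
Your construction (prescribing $A'=\psi$ as a mollified two--plateau function and integrating) is a legitimate alternative to the paper's, and properties (1)--(3) would follow with routine care. The genuine gap is in your verification of (4) for $\|A^{-1}\|_n^*$. You propose to bound every Fa\`a di Bruno term for the inverse by combining the global estimates $|A^{(k)}|\le C_k/\d^k$ with the global lower bound $A'\ge\d/(2s)$. The worst term in $(A^{-1})^{(n)}$ is $c\,(A'')^{n-1}/(A')^{2n-1}$ (evaluated at $A^{-1}(y)$), and your bounds give for it at most $C\,\d^{-2(n-1)}\cdot(2/\d)^{2n-1}\sim \d^{-(4n-3)}$; indeed $4n-3$ is exactly the total power of $1/\d$ your bookkeeping produces. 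But $4n-3>n^2$ when $n=2$: you get $|(A^{-1})''|\le C/\d^5$ where the lemma demands $\rho_2/\d^4$ with $\rho_2$ independent of $\d$ and $s$. So the claimed absorption into $\rho_n/\d^{n^2}$ fails at $n=2$, and whether your actual $\psi$ satisfies the $\d^{-4}$ bound depends on fine edge behavior of the mollifier (the ratio $\psi'/\psi^3$ near the low plateau), which you have not analyzed. The source of the loss is structural: you invert $A$ at points where $A'$ is at its minimum $\d/(2s)$, and every power of $1/A'$ there costs a factor $2s/\d$.

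The paper avoids this entirely by making the construction symmetric: $a$ is first built only on $[0,\d]$, where $A'\ge 1$, and $A$ on $[\d,s]$ is \emph{defined} by reflecting the graph across $y=s-x$, i.e.\ $A=s-G(s-\cdot)$ with $G=(A|_{[0,\d]})^{-1}$. Then derivatives of $A^{-1}$ anywhere reduce either to derivatives of $A$ on $[0,\d]$ (order $\d^{-n}$) or to derivatives of $G$, and in the inductive Fa\`a di Bruno estimate for $G$ one divides only by $(A')^n\ge 1$, yielding $\d^{-((n-1)^2+n)}\le\d^{-n^2}$ with no loss. To repair your argument you would need either to impose such a symmetry (so that the inverse is never computed where $A'$ is small), or to prove a refined pointwise estimate showing that where $A'$ is small the higher derivatives of $A$ are correspondingly small --- at minimum for $n=2$, where the crude count provably overshoots the required exponent.
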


Conditions (1), (2), and (3) guarantee that $A$ is a diffeomorphism. 

\begin{proof} First we construct the function $a$ 
on the intervals $[0,\d]$.
Let $g$ be a $C^\infty$ function on $[0,1]$ 
such that 
$g(x)=0$ in a neighborhood $[0,\e)$ of 0, 
$\,g(x)=1$ in a  neighborhood  $(1-\e,1]$ of 1,
and $0\le g'(x)\le 2$ for all $x$.
We obtain the function $a$ on $[0,\d]$ by rescaling $g$:
$$
 a(x)=(s-2\d)\, g(x/\d) \quad\text{for }x\in[0,\d].
$$
Then $a(x)=0$ in a neighborhood of $0$,  $a(x)=s-2\d$
in a neighborhood of $\d$, 
and $0\le a'(x)\le 2(s-2\d)/\d=2s/\d-4$.
This implies that $A(x)=x$ in a neighborhood of $0$,  
$\,A(\d)=s-\d$,  $\, A'(x)=1$ in a neighborhood of $\d$,
and $1\le A'(x)\le 2s/\d-3\le 2s/\d$
for all $x$ in $[0,\d]$.

Now we obtain the graph of the function $A$ on $[\d,s]$ by 
reflecting its graph on $[0,\d]$ with respect to the line $y=s-x,\;$ i.e.
\begin{equation}\label{inverse}
   A(x)=s-A^{-1}(s-x)\quad\text{for }x \text{ in } [s-\d,s].
\end{equation}  
 It follows from the symmetry of the graph that $A(x)=x$ 
 in a neighborhood of $s$ and 
 $1\ge A'(x)\ge \d/2s\,$ for all $x$ in $[s-\d,s]$.  
 On this interval, we set $a(x)=A(x)-x$. Clearly,
 $a(x)\ge 0$ for all $x$ and $a(x)=0$ in a neighborhood of $s$.
 Then we extend $a$ to $[0,1]$ by periodicity, and thus 
 obtain $A=\Id +a$ on $[0,1]$.
 Thus we have constructed a function $A$ satisfying (1), (2), and (3).
 \vskip.2cm
Now we will verify (4) for  $A$ on $[0,\d]$.  Then \eqref{inverse}
 will imply that (4) is also satisfied  for $A$ on $[\d,s].\;$ Since  
$\, \max_{[0,\d]} |a^{(n)}| \le  \max_{[0,1]} |g^{(n)}|\, /\delta^n,\,$
we have 
\begin{equation}\label{kappa}
\max_{[0,\d]} |A^{(n)}| \;\le\;  (\max_{[0,1]} |g^{(n)}|+1)\, /\delta^n
\;\overset{\text{def}}{=}\; \kappa_n/ \d^n\; \;
 \text{ for any } n\ge 0.
\end{equation}
 
Let $G$ be the inverse function for $A|_{[0,\d]}$.  We will show using induction 
that for any $n\ge 0$ there exists a constant $\xi_n$ independent of
 $\d$ and $s$ such that 
$$ 
     \max_{[0,1]} |G^{(n)}| \le \xi_n /\d^{n^2} \quad 
     \text{and }\; \xi_n\ge \xi_{n-1}\ge \dots \ge\xi_0. 
$$ 
Clearly, $G(x)\le 1$ and $\,G'(x) \le1 \le 1/\d$ for all $x$.
Thus the statement  holds for $k=0$ and $k=1$. Suppose that 
it holds for all $0\le k \le n-1$.
For $n\ge 2,$ $\;\frac{d^n}{dx^n}\,G(A(x))=\frac{d^n}{dx^n}\,x=0,\,$
and it follows from  \eqref{Bruno} that
$$
   G^{(n)}(A(x))\cdot (A'(x))^n= -\sum c_{m_1, \dots, m_n}
   \,G^{(m_1+\dots +m_n)}(A(x)) 
   \prod_{j=1}^n \left(A^{(j)}(x)\right) ^{m_j},
$$
where the sum is taken over all $n$-tuples $(m_1, \dots m_n)$ 
such that  $1m_1+\dots +nm_n=n$ and $m_1\ne n$.
This implies that $m_1+\dots +m_n \le n-1$,  and hence
 $$ 
    | G^{(m_1+\dots +m_n)}(A(x)) | 
    \;\le\;  \xi_{n-1} /\d^{(n-1)^2}
 $$
by the induction assumption. Also,
$$ 
     \left| \prod_{j=1}^n \left(A^{(j)}(x)\right) ^{m_j} \right| \le\,
      \prod_{j=1}^n \left(\kappa_j/\d^j\right) ^{m_j} =\,
     \left(\prod_{j=1}^n  \kappa_j ^{m_j}\right) / \d^{1m_1+2m_2+\dots +nm_n}\,\le\,
     \left( \prod_{j=1}^n  \kappa_j ^n\right) / \d^n.
 $$
 Using the estimates above, we obtain
$$
  |G^{(n)}(A(x))\cdot (A'(x))^n| \;\le\; \xi_n /\d^{(n-1)^2+n} \;\le\;
   \xi_n/ \d^{n^2},
$$
where $\xi_n \ge \xi_{n-1}$ is a constant independent of 
$s$ and $\d.\;$ Since $A'(x)\ge 1$ on $[0,\d],$ it follows that
$$
  |G^{(n)}(A(x))| \le 
  \xi_n/ \d^{n^2} \quad\text{for all } x\in [0,\d].
$$
Let $\,\rho_n=\max \,\{\kappa_0, \dots , \kappa_n, \,\xi_n \},\,$ 
where $\kappa_0, \dots \kappa_n$ are as in \eqref{kappa}.
Then  
\vskip.1cm
$\,\|A\|_n^*\le \rho_n/ \d^n \le \rho_n/ \d^{n^2},
\;\; \|A^{-1}\|_n^*=\|G\|_n^*\le \rho_n/ \d^{n^2},
\; \text{ and thus } \|A\|_n\le \rho_n/ \d^{n^2}.
$
\end{proof}
\vskip.3cm

%%%%%%%%%%%%%%%%%%%%%%%%%%%
\newpage

\begin{lemma}\label{norm_lemma} 
Let $A=A_{s,\d}$ be a function as in  Lemma \ref{a_s lemma}. 
Then for any $n\ge 0$ and any $C^n$ diffeomorphism $h$
\begin{equation}\label{norm}
  \|A \circ h \|_{n} \,\le\, \tilde{c}(h, n)/ \d^{n^2},
 \end{equation} 
where the constant $\tilde{c}(h,n)$ depends on $h$ and $n$, but not on $\d$ and $s$.
\end{lemma}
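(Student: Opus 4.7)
The plan is to expand both $(A\circ h)^{(k)}$ and $((A\circ h)^{-1})^{(k)}$ via Fa\`a di Bruno's formula \eqref{Bruno} for each $0\le k\le n$, and then to bound each factor using Lemma \ref{a_s lemma}(4) together with the fact that $h$ is $C^n$ with a norm independent of $\d$ and $s$. Throughout the argument I will use that $\d<s/2\le 1/2<1$, so that $1/\d^p$ is monotone in $p$, which lets me absorb smaller powers of $1/\d$ into the worst one at level $k=n$.

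First I would bound $\|A\circ h\|_n^*$. By \eqref{Bruno},
$$
  (A\circ h)^{(k)}(x)=\sum c_{m_1,\dots,m_k}\,A^{(m_1+\cdots+m_k)}(h(x))\prod_{j=1}^k\bigl(h^{(j)}(x)\bigr)^{m_j},
$$
where the sum is over tuples satisfying \eqref{m}. Since $m_1+\cdots+m_k\le k\le n$, Lemma \ref{a_s lemma}(4) yields $|A^{(m_1+\cdots+m_k)}|\le \rho_k/\d^{k^2}$, while the product $\prod_j (h^{(j)})^{m_j}$ is bounded in absolute value by a constant depending only on $h$ and $k$. Summing the finitely many terms gives $|(A\circ h)^{(k)}|\le C_1(h,k)/\d^{k^2}\le C_1(h,k)/\d^{n^2}$.

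Next I would bound $\|(A\circ h)^{-1}\|_n^* = \|h^{-1}\circ A^{-1}\|_n^*$ by the same method:
$$
  (h^{-1}\circ A^{-1})^{(k)}(y)=\sum c_{m_1,\dots,m_k}\,(h^{-1})^{(m_1+\cdots+m_k)}(A^{-1}(y))\prod_{j=1}^k\bigl((A^{-1})^{(j)}(y)\bigr)^{m_j}.
$$
Here $(h^{-1})^{(m_1+\cdots+m_k)}$ is bounded by a constant depending only on $h$ and $k$, while each $(A^{-1})^{(j)}$ is bounded by $\rho_j/\d^{j^2}$ using Lemma \ref{a_s lemma}(4). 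The main obstacle is controlling the total power of $1/\d$ in the product: it equals $\sum_{j=1}^k j^2 m_j$, and I need to show this is at most $n^2$. This is the key combinatorial step. Using the constraint $\sum_{j=1}^k j\,m_j=k$ from \eqref{m} together with $j\le k$ whenever $m_j\ne 0$, I get
$$
  \sum_{j=1}^k j^2 m_j \;=\; \sum_{j=1}^k j\cdot(j\,m_j)\;\le\; k\sum_{j=1}^k j\,m_j\;=\;k^2\;\le\;n^2,
$$
so each product is bounded by a constant (depending on $h$ and $k$) divided by $\d^{n^2}$.

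Finally I would take the maximum of the two bounds over $0\le k\le n$ and set $\tilde c(h,n)$ to be the resulting constant, which depends only on $h$ and $n$ (through $\rho_0,\dots,\rho_n$, the $C^n$-norms of $h$ and $h^{-1}$, and the fixed combinatorial coefficients $c_{m_1,\dots,m_k}$), but not on $\d$ or $s$. This yields \eqref{norm}.
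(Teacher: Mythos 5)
Your proposal is correct and follows essentially the same route as the paper: Fa\`a di Bruno applied to both $A\circ h$ and $h^{-1}\circ A^{-1}$, the bounds from Lemma \ref{a_s lemma}(4) on the derivatives of $A$ and $A^{-1}$, and the key combinatorial estimate $\sum_j j^2 m_j \le k\sum_j j\,m_j = k^2 \le n^2$, which is exactly the inequality the paper uses. No gaps.
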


\begin{proof} Let $0\le k\le n$.
 Each term of the sum representing $(A\circ h)^{(k)}$
is a product of a derivative of $A$ of order at most $k$
and at most $k$ derivatives of $h$, see \eqref{Bruno}. 
Therefore, each term can be estimated by 
$\| A \|_k \cdot \|h\|_k^k \,\le \| A \|_n \cdot \|h\|_n^n,\;$ and hence 
$$
   \|A \circ h\|_n^*  \,\le\,  c_n \| A \|_n \cdot \|h\|_n^n \,\le\, 
   c_n \|h\|_n^n \cdot \rho_n/\d ^{n^2},
$$
where $c_n$ is the sum of the constants $c_{m_1, \dots, m_n}$ 
in \eqref{Bruno}
and $\rho_n$ is as in Lemma \ref{a_s lemma} (4).  Each term 
of $(h^{-1}\circ A^{-1})^{(k)}$ can be estimated as follows:
$$
\begin{aligned}
  & | (h^{-1})^{(m_1+\dots +m_k)}(A^{-1}(x)) 
   \prod_{j=1}^k \left( (A^{-1})^{(j)}(x)\right) ^{m_j} | \le
   \|h\|_k \cdot (\rho_1/\d^{1^2})^{m_1}  \dots (\rho_k/\d^{k^2})^{m_k}\\
   &\le \|h\|_k \cdot \rho_1\dots \rho_k /
   \d^{1^2m_1+2^2m_2+ \dots +k^2 m_k} \le 
   \|h\|_k \cdot \rho_1\dots \rho_k / \d^{k^2} 
   \le \|h\|_n \cdot \rho_1\dots \rho_n / \d^{n^2} 
\end{aligned}
$$
since $1^1m_1+2^2m_2+ \dots +k^2m_k
\le k(1m_1+2m_2+ \dots +km_k)=k^2.$
It follows that 
$$
   \| h^{-1}\circ A^{-1} \|^*_n \,\le\, c_n \|h\|_n \cdot  \rho_1\dots \rho_n / \d^{n^2}.
$$
Thus  $\|A \circ h \|_{n} \,\le\, \tilde{c}(h, n)/ \d^{n^2}$.
\end{proof}

\vskip.3cm

For the rest of the proof we will consider  the cases
  of $\beta=0$ and of $0<\beta<1$ separately.
  \vskip.2cm
  
  \begin{flushleft}
 {\bf The case of $\beta=0$}.
 \end{flushleft}

We will construct the sequences $\{\tau_n\}_{n=1}^{\infty}$ 
and $\{h_n\}_{n=1}^{\infty}$ 
inductively. Let $h_1$ be the identity map,  $\tau_1$ be 
a rational number close to $\tau$, and $\,s_1=1/2$. 
Suppose that a number 
$\tau_{n-1}=p_{n-1}/q_{n-1}$, a function $a_{n-1}$ 
of period $s_{n-1}$, and hence  diffeomorphisms
 $A_{n-1}=\Id+a_{n-1}$ and $h_n=A_{n-1}\circ h_{n-1}$ are selected.
  We denote
 $$
    M_n=\max_{[0,1]} h_n' \quad \text{and} \quad
    m_n=\min_{[0,1]} h_n'=1/{\max}_{[0,1]} (h_n^{-1})'.
 $$
 Clearly, $M_n\ge1$ and  $0< m_n\le1$ 
for all $n\ge 1$, and $M_n\to\infty$, $m_n\to 0$ as $n\to \infty$.
\vskip.1cm

 We choose a  rational number $\tau_n=p_n/q_n$, 
numbers  $s_n$  and $\delta_n$, 
a function $a_n$ of a period  $s_n$, and a function $A_n$ such that 

\begin{equation}\label{choice}
\begin{aligned}
\text{(i)} \;\;\; & |\,\tau-\tau_n\,| \le |\,\tau-\tau_{n-1}\,|,\\
\text{(ii)} \;\;\, &|\,\tau-\tau_n\,| = |\,\tau-p_n/q_n\,| \le 1/q_n^{3n^4}, \\
\text{(iii)}\;\, &  q_n \ge\max \,\{\, 1/s_{n-1},\;\, 1/m_n, \;\, (3M_n)^n,
\;\, c_n,\;\, \tilde{c}\,(h_n,n+1) \,\}, \\
    & \text{ where } c_n \text{ is as in \eqref{two rotations}$\;$ and }\; 
     \tilde{c}\,(h_n,n+1)\text{ is  as in \eqref{norm}}, \hskip1cm\\
\text{(iv)}\;\,  &s_n=s_{n-1}/q_n, \; \text{ hence } s_n \le s_{n-1}^2
        \;\text{ and }\; s_n \le 1/2^n, \\
\text{(v)}\;\;\,  &\d_n = s_n^n , \; \text{ in particular, } \d_n < s_n/2, \\
\text{(vi)}\;\,  & a_n\text{ and }A_n \text{ are as in Lemma \ref{a_s lemma} with } \d=\d_n \text{ and } s=s_n.
\end{aligned}
\end{equation}
Conditions (i), (ii), and (iii) can be satisfied since $\tau$ is a Liouville number. Condition (iv) ensures that  the maps
 $A_n$ and $R_{\tau_{n}}$   commute, and hence
 $$
 \begin{aligned}
  &h_n^{-1} \circ R_{\tau_n} \circ h_n \;= \;
  h_n^{-1} \circ A_n^{-1} \circ A_n \circ R_{\tau_n} \circ h_n \\ 
  & = \, h_n^{-1} \circ A_n^{-1} \circ R_{\tau_n}  \circ A_n\circ h_n \;=\;
  (A_n \circ h_n)^{-1} \circ R_{\tau_n} \circ (A_n \circ h_n) 
  \end{aligned}
 $$
 Conditions (iii), (iv), and (v) imply  that 
 $\,1/\d_n = 1/s_n^n =q_n^n/ s_{n-1}^n \le q_n^{2n}$.
Using this
 as well as \eqref{two rotations} and  \eqref{norm} we obtain 
 \begin{equation}\label{convergence}
\begin{aligned}
& d_n (f_{n+1}, f_n) \, = \,
    d_n \left( h_{n+1}^{-1} \circ R_{\tau_{n+1}} \circ h_{n+1},\;
    h_n^{-1} \circ R_{\tau_n} \circ h_n \right) \\
 & = d_n \left(  
 (A_n \circ h_n)^{-1} \circ R_{\tau_{n+1}} \circ (A_n \circ h_n),\;\;
 (A_n \circ h_n)^{-1} \circ R_{\tau_n} \circ (A_n \circ h_n) \right)\\
 &\le c_n  |\tau_{n+1}-\tau_n | \cdot \|A_n \circ h_n \|_{n+1}^{n+1}
 \;\le \;2 c_n  |\tau-\tau_n | \cdot  
 \left(\tilde{c}\,(h_n,n+1) /\d_n^{(n+1)^2}\right)^{n+1} 
\\
 &\le \; 2q_n (1/q_n^{3n^4})  \left( q_n \cdot q_n^{2n(n+1)^2} \right)^{n+1}
 \,\le\; 2 (1/q_n^{3n^4}) \, q_n^{2n(n+1)^3+n+2} \;\le \;1/2^n
\end{aligned}
\end{equation}
for all sufficiently large $n$.
 Since $d_n (f_{m+1}, f_m) \le d_m (f_{m+1}, f_m)\le 1/2^m$ 
 for $m\ge n$, it follows that the sequence $\{f_n\}$ converges 
 in the $C^n$-topology  for any $n$, i.e. it converges in the 
 $C^\infty$ topology.
  \vskip.2cm
  
  Now we will establish the convergence of the diffeomorphisms 
  $h_n$. We recall that $A_n=\text{Id}+a_n$ is a  diffeomorphism, 
 where $a_n$ is a $C^\infty$ function of period $s_n$, 
 and $1/s_n$ is an integer.
 It follows that $\max_{[0,1]} |A_n-\text{Id}\,| \le s_n$ and 
 $\max_{[0,1]} |A_n^{-1}-\text{Id}\,| \le s_n$.
 Since $h_{n+1} = A_n \circ h_n$, we estimate
 $$
    \max_{[0,1]} |\,h_{n+1} - h_n| 
            \,=\, \max_{[0,1]} |\,(A_n-\text{Id})\circ h_n | \,\le \, s_n, 
         \quad \text{and}
  $$
  $$          
    \max_{[0,1]} |h_{n+1}^{-1} - h_n^{-1}|  = 
    \max_{[0,1]} |h_{n}^{-1}\circ (A_n^{-1} -\text{Id}) |  \le 
    \max_{[0,1]} (h_n^{-1})' \cdot s_n = (1/m_n) s_n \le s_{n-1}
 $$  
by \eqref{choice} (iii) and (iv). This implies  that  $\,d_0(h_{n+1}, h_n) \le s_{n-1}\,$, and  since $s_n \le 1/2^n$, it follows that  the sequence of  
 diffeomorphisms $\{h_n\}$ converges to a homeomorphism $h$ 
with respect to the distance $d_0$. Moreover,  since 
 $\,s_{n} \le s_{n-1}^2$ we have
\begin{equation}\label{d_0(h, h_n)}
\max_{[0,1]} |h-h_n| \;\le\; {\sum}_{k=n}^\infty s_k \;\le \; 2s_n.
 \end{equation} 
  
  \vskip.2cm
 
 Now we will prove the dimensional properties of the invariant 
 measure $\mu$  with distribution function $h$.  
By the construction, most of the growth of $A_n$ on the 
interval $[0,1]$ is concentrated on the union of intervals 
$[\,is_n,\, is_n+\d_n]$. Let
\begin{equation}\label{E_n}
    \tilde{E}_{n}={\bigcup} _{i=0}^{(1/s_n)-1} [\,is_n,\, is_n+\d_n]
    \quad \text{and}\quad    E_{n}=h_{n}^{-1}(\tilde{E}_n).
\end{equation}
 Then   $h_{n+1}(E_{n})=(A_n\circ h_n)(h_n^{-1}(\tilde E_n))=
 A_n(\tilde{E}_n)$.
\vskip.1cm

Since $\tilde E_n$ consists of $1/s_n$ intervals $[\,is_n,\, is_n+\d_n]$
and $ A( is_n+\d_n)-A(is_n)=s_n-\d_n$, the total growth 
of $A_n$ on $\tilde{E}_{n}$ is 
 $$
    (s_n-\d_n)(1/s_n) =1-\d_n/s_n = 1-s_n^n/s_n
    \ge 1-s_n^{n-1} \ge 1-s_n,
 $$
and  the total growth of $h_{n+1}$ on $E_n$ is the same.
By \eqref{d_0(h, h_n)},  
$\;\Delta h \ge \Delta h_n - 4s_{n+1}$ on each of the $1/s_n$  
intervals in $E_n$.  Since $s_{n+1} \le s_n^2\,$ and 
$s_n\le 1/2^n$  by  \eqref{choice} (iv), we estimate that 
the total growth of $h$ on the set $E_{n}$ is at least 
$$
    1-s_n-4s_{n+1}(1/s_n) \;\ge\; 1-s_n-4s_n^2/s_n \;=\;
     1-5s_n \;\ge\; 1-5/2^n.
$$
Thus, for the measure $\mu$ with distribution function $h$,
\begin{equation}\label{mu(E_n)}
     \mu(E_{n}) \ge 1-5/2^n.
\end{equation}
\vskip.2cm

 Now we will show that $\underline{d}_\mu(x)$, the lower 
 pointwise dimension of $\mu$ at $x$, is 0   for $\mu$-almost every $x$.
We recall that $m_n=\min_{[0,1]} h_n'.\; $     
 The length of each interval $I$ in the set $E_{n}$ is bounded 
above by $\d_n/m_n$, since the length of  $h_n(I)$ is $\d_n$.
Let 
 $$
   r_n=\d_n /m_n=s_n^n/m_n .
  $$
Let $x$ be a point in $E_n$. Then the interval $[x-r_n, x+r_n]$
contains one of the intervals in $E$ and hence 
$\Delta h_{n+1}(x,r_n)\ge s_n-\d_n.\;$ 
It follows from \eqref{d_0(h, h_n)} that
$$  
 \Delta h(x,r_n) \,\ge\, \Delta h_{n+1}(x,r_n)-4s_{n+1} \,\ge\, 
 s_n-\d_n -4s_{n+1} \,\ge\, s_n/2
$$  
for all sufficiently large $n$ since $\,\d_n=s_n^n\,$ and 
$\,s_{n+1}\le s_n^2.\;$ Therefore
$$
 \frac{\log \Delta h(x,r_n)}{\log r_n} \;\le\;
  \frac{\log (s_n /2)}{\log r_n} \;= \;\frac{\log (s_n/ 2)}{\log (s_n^n/ m_n)}   \;\le\; \frac{2}{n}. 
 $$ 
The last inequality is equivalent to $\,s_n\le m_n^{2/n}/2\,$, and it follows  from \eqref{choice} that 
$s_n=s_{n-1}/q_n \le  s_{n-1}m_n \le m_n/2$.
\vskip.1cm

Thus for any sufficiently large $n$ there exists $r_n>0$ such that 
$$
 \frac{\log \mu(B(x,r_n))}{\log r_n} 
 \;=\; \frac{\log \Delta h(x,r_n)}{\log r_n}\;\le \; \frac{2}{n}
 \quad \text{for any } x\in E_n,
$$
 and $r_n\to 0$ as $n\to \infty$.
  Let $x$ be a point in $[0,1]$. If follows  that 
 $\underline{d}_{\mu}(x)=0$ provided that 
 for any $m$ there exist $n\ge m$ such that $x\in E_n$.
   Otherwise, $x$ is in $J=\bigcup_{m=1}^{\infty}
     \bigcap_{n=m}^{\infty}([0,1] - E_n)$.
It follows from   \eqref{mu(E_n)} that 
     $\mu(\bigcap_{n=m}^{\infty}([0,1] - E_n))=0$
     and hence $\mu(J)=0$. We conclude that 
\begin{equation} \label{lower 0}
   \underline{d}_{\mu}(x)=0 \quad\text{for }\mu\text{-almost every  }  
    x\in S^1.
\end{equation}

 \vskip.2cm
 Now we will show that $\overline{d}_\mu(x)$, the upper 
 pointwise dimension of $\mu$ at $x$, equals 1 for 
 $\mu$-almost  all $x$. We recall that $M_n=\max_{[0,1]} h_n',\; $
and hence $\Delta h_{n}(x,r)\le 2rM_n$.
We take $\tr_n=(3M_n)^{-n}$ and note that by \eqref{choice}
$s_n=s_{n-1}/q_n < 1/q_n\le 1/ (3M_n)^{n} \le \tr_n$.
It follows that 
$$
\Delta h(x,\tr_n) \le \Delta h_{n}(x,\tr_n)+4s_{n} 
\le 2\tr_n M_n + 4\tr_n \le 3M_n \tr_n
$$
for all sufficiently large $n$. Hence for all $x$,
$$
\frac{\log \mu(B(x,\tr_n))}{\log \tr_n} \,=\,
 \frac{\log \Delta h(x,\tr_n)}{\log \tr_n} \, \ge\,
 \frac{\log (3M_n\tr_n)}{\log \tr_n} \,=\,
  1+\frac{\log (3M_n)}{\log ((3M_n)^{-n})} = 1-\frac{1}{n}.
$$ 
Clearly, $\tr_n \to 0$ as $n\to \infty$, and  we conclude that  
$\overline{d}_\mu(x) \ge 1$
for all $x$. Since  $\mu$ is a Borel probability measure on $S^1$,
$\overline{d}_\mu(x) \le 1$ for $\mu$-almost every $x$ 
(\cite{KS} Lemma 2.1). Thus $\, \overline{d}_{\mu}(x)=1\,$
for  $\mu$-almost  all  $x$.  
 Combining this with \eqref{lower 0} we obtain
$$
   \underline{d}_{\mu}(x)=0 \quad \text{and}\quad  
   \overline{d}_{\mu}(x)=1 
   \quad\text{for }\mu\text{-almost every  }    x.
$$
This completes the proof of the first statement of Theorem 
\ref{main} for the case of $\beta=0$. 
\vskip.3cm

%%%%%%%%%%%%%%%%%%%%%%%%%%

Now we will prove the results for the box and Hausdorff 
dimensions of $\mu$ (see Section \ref{Hausdorff, box} for the definitions).
First we show that $\od_{B\,} \mu=1.$
For $\tr_n$ as above, we have
$$
  \mu(B(x,\tr_n))\; \le \;
  \tr_n^{\,1-1/n} \;
  \text{ for any }x\in S^1. 
  $$
Let $Z$ be a set in $S^1$ with $\mu(Z) > 0.\,$  Then at least $\mu(Z)\cdot \tr _n^{\,-(1-1/n)}$ balls of radius 
  $\tr_n$ are needed to cover $Z$. Thus, 
  $$
    \frac{\log N(Z,\tr_n)}{\log (1/\tr_n)}   \; \ge \; 
     \frac{\log\, ( \mu(Z)\,\tr _n^{\,-(1-1/n)})}{-\log \tr_n} \;= \;
     1-\frac{1}{n}-\frac{\log \,\mu(Z)}{\log \tr_n} 
     \;\underset{n\to\infty}{\longrightarrow} 1.
 $$
 Since $\tr_n \to 0$ as $n\to\infty$, this implies that  that 
 $\od_B(Z)=1$. Thus, $\od_B(Z)=1$ for any set $Z$ 
 with $\mu(Z)>0$, and hence $\od_{B\,}\mu = 1$ by the
 definition \eqref{dim_mu}. 
\vskip.1cm 

Now we prove  that 
 $\ud_{B\,} \mu$,  the lower box dimension 
of $\mu$,  equals 0. 
Let $\,G_k=\bigcap_{n=k}^\infty E_n.\,$
By \eqref{mu(E_n)}, $\,\mu(E_{n}) \ge 1-5/2^n\,$
for each $n$, and hence 
$$
  \mu(G_k) \ge 1-5/2^{k-1}  \to 1\;\text{ as }\;k\to\infty.
 $$ 
We recall that for each $n$ the set $E_n$ consists 
of $1/s_n$ intervals of length at most $r_n$,
and $\log s_n /\log r_n\to 0$ as $n\to \infty$.
This implies that each $E_n$, and hence $G_k$,
can be covered by at most $1/s_n$ balls of diameter $r_n$, 
i.e. $N(G_k,r_n) \le 1/s_n$. Therefore,
$$
\ud_{B\,} G_k = \liminf_{\e\to 0}
\frac{\log N(G_k,\e)}{\log (1/\e)} \;\le \;
\underset{n\to \infty}{\lim \inf}\; \frac{\log N(G_k,r_n)}{\log (1/r_n)} \;\le\;
\underset{n\to \infty}{\lim}\; \frac{\log  s_n}{\log r_n} =0
$$
for any $k>0$.  Thus for any $\e>0$ there exists a set $G$
such that $\mu(G) > 1-\e$ and $\ud_{B\,} G=0$, which implies 
that $\ud_{B\,} \mu=0$ by the definition \eqref{dim_mu}. 
And since $0\le \dim_H \mu \leq \ud_{B\,} \mu$, 
it follows that the Hausdorff dimension of $\mu$ is also 0.
\vskip.2cm

 This completes the proof of the theorem for the case of $\beta=0$.
\vskip.3cm
 
 %%%%%%%%%%%%%%%%%%%%%%%%%%%%%%
%%%%%%%%%%%%% 0< beta <1 %%%%%%%%%%%%
 \newpage
 
 \begin{flushleft}
 {\bf The case of $\,0<\beta<1$}.
 \end{flushleft}
  
The proof for this case uses the same approach as the proof for
the case of $\beta=0$. However, some modifications are
needed to ensure that the lower pointwise dimension is $\beta>0$.
\vskip.1cm

Let $\,\gamma=(1-\beta)/2,\,$ then $\,\gamma >0\,$ and 
$\,\beta+\gamma/n < 1\,$ for any $n\ge1$. We will construct the sequences $\{\tau_n\}_{n=1}^\infty$ and $\{h_n\}_{n=1}^\infty$ 
inductively. Let $h_1$ be the identity map,  $\tau_1$ be 
a rational number close to $\tau$, and $\,s_1$
be a number such that $1/s_1$ is an integer  and 
$\d_1=s_1^{1/(\beta+\gamma)}< s_1/2$. 
Suppose that  $\tau_{n-1}=p_{n-1}/q_{n-1}$, a function 
$a_{n-1}$  of period $s_{n-1}$, and hence 
 $A_{n-1}$ and $h_n$ are selected.  As before, we set 
 $ \,M_n=\max_{[0,1]}h_n'$ and $m_n=\min_{[0,1]}h_n'.$ 
  We  choose  numbers
 $\tau_n=p_n/q_n$, $s_n$  and $\delta_n$, and
functions $a_n$ of a period  $s_n$ and $A_n$ such that 
\begin{equation}\label{choice_beta}
\begin{aligned}
\text{(i)} \;\;\; & |\,\tau-\tau_n\,| \le |\,\tau-\tau_{n-1}\,|,\\
\text{(ii)} \;\;\, &|\,\tau-\tau_n\,| = |\,\tau-p_n/q_n\,| \le 1/q_n^{n^4}, \\
\text{(iii)}\;\, &  q_n \ge\max \,\{\, 1/s_{n-1},\;\, 1/m_n, \;\, (3M_n)^n,\;\,
c_n,\;\, \tilde{c}\,(h_n,n+1) \,\}, \; \\
    &\text{ where } c_n \text{ is as in \eqref{two rotations} }
     \text{and } \tilde{c}\,(h_n,n+1)\text{ is  as in \eqref{norm}},\\
\text{(iv)}\;\,  &s_n=s_{n-1}/q_n, \; \text{ hence } s_n \le s_{n-1}^2
        \;\text{ and }\; s_n \le 1/2^n, \\
\text{(v)} \;\;\, & s_n\le 2^{-n-1}s_{n-1}, \; \text{ and }\;
  s_n^{\gamma}  \le \min \,\{\,1/(M_n+1)^n, \; (m_n/(2M_n))^n \,\}, 
  \hskip.5cm \\
\text{(vi)}\;\,  &   \d_n= m_n\, s_n^{1/(\beta+\gamma/n)} , \; \text{ hence } \d_n < s_n/2, \\
\text{(vii)}\; & a_n \text{ and }A_n \text{ are as in Lemma \ref{a_s lemma} with } \d=\d_n \text{ and } s=s_n.
\end{aligned}
\end{equation}
Condition (v) will be used only in the proof of Lemma \ref{Holder}
below. Clearly, it can be satisfied by choosing a sufficiently large $q_n$. We note that 
$q_n \ge \tilde{c}\,(h_n,n+1) \ge \| h\|_n \ge \max |h^{-1}|=1/m_n,\,$
and hence
  $$
  \frac{1}{\d_n}\; = \; \frac{1}{m_n s_{n}^{1/(\beta+\gamma/n)}}\;=\; 
  \frac{q_n^{1/(\beta+\gamma/n)}}
  {m_n s_{n-1}^{1/(\beta+\gamma/n)}} \;\le \;
   \frac{q_n^{2/(\beta+\gamma/n)}} {m_n } \;\le \;
  \frac{q_n^{2/\beta}}{m_n} \;\le\; q_n^{(2/\beta ) +1} 
  \;\le\; q_n^{3/\beta}.
  $$
Thus we can obtain an estimate similar to \eqref{convergence}:
$$
\begin{aligned}
& d_n (f_{n+1}, f_n) \, = \; \dots 
 \;\le \;2 c_n  |\tau-\tau_n | \cdot  
 \left(\tilde{c}\,(h_n,n+1) /\d_n^{(n+1)^2}\right)^{n+1} \; \le \; 
\\
 & 2q_n (1/q_n^{n^4})  \left( q_n (q_n^{3/\beta})^{(n+1)^2} \right)^{n+1}
 \;\le\; 2 (1/q_n^{n^4}) \, q_n^{(3/\beta)(n+1)^3+n+2} \;\le \;1/2^n
\end{aligned}
$$
for all sufficiently large $n$, and establish convergence of the 
sequence $\{f_n\}$ in the $C^\infty$ topology. The convergence 
of $\{h_n\}$ with respect to the distance $d_0$ can be shown as before.
\vskip.3cm

We define the sets  $\tilde{E}_n$ and $E_n$ be as in \eqref{E_n}. 
The total growth of $A_n$ on $\tilde{E}_n$, and hence the total 
growth of $h_{n+1}$ on $E_n$ is 
$$
   (s_n-\d_n)(1/s_n) \;=\; 
    1-\d_n/s_n  \;\ge\; 1- s_n^{(1/(\beta+\gamma /n))-1}.
$$
Since $E_n$ consists of $1/s_n$ intervals,
$\max|h- h_{n+1}| \le 2s_{n+1}$, and $ s_{n+1} \le s_n^2,\,$ 
the total growth of $h$ on  $E_{n}$ is at least 
$$
   1-s_n^{(1/(\beta+\gamma /n))-1} - 4s_{n+1}/s_n \;\ge\;
   1-s_n^{(1/(\beta+\gamma /n))-1} - 4s_n \;\ge\; 1-5s_n^\sigma
   \;\ge \;1-5/2^{n\sigma},
 $$ 
where $\sigma=\min \,\{\,(1/(\beta+\gamma)-1, \,1\,\} >0.$
Thus, for the measure $\mu$ with distribution function $h$,
\begin{equation}\label{mu(E_n)beta}
     \mu(E_{n}) \ge 1-5/2^{n\sigma}.
\end{equation}
\vskip.1cm

Now we will show that $\underline{d}_\mu(x)=\beta$ for almost all $x$.
We take 
\begin{equation} \label{r_n}
       r_n= \d_n/m_n =  s_n^{1/(\beta+\gamma/n)}, \quad\text{then}
\end{equation}
\begin{equation} \label{properties}
 r_n < s_n, \quad  s_n=r_n^{\beta+\gamma/n}, \quad\text{and hence}\quad 
       \frac{\log s_n}{\log r_n}  \,=\, \beta+ \frac{\gamma}{n}.
\end{equation}    
Let $x$ be a point  in $E_n$. Then
$$
   \Delta h(x, r_n) \ge \Delta h_{n+1}(x, r_n) - 4s_{n+1} \ge 
   s_n -\delta_n - 4s_{n+1}  \ge s_n/2
   $$
for all sufficiently large $n$. Therefore
 $$
  \frac{ \log \Delta h(x, r_n) }{\log r_n} \;\le\; 
   \frac{\log (s_n/2)}{\log r_n} \;=\; 
   \frac{\log s_n}{\log r_n} - \frac{\log2}{\log r_n} \;=\; 
   \beta - \frac{\log2}{\log r_n} \;\le\; \beta+\frac{1}{n}
 $$
since $r_n < s_n \le 1/2^n.\;$
It follows as before 
that $\underline{d}_\mu(x)\le \beta \,$ for $\mu$-almost every $x.$    
\vskip.2cm

To show that $\underline{d}_\mu(x)\ge \beta$ we will prove 
that the  function $h$
is H\"older continuous with exponent $\beta$. 
Then for any $x$ and $r>0$,
  $\;\Delta h(x,r) \le C(2r)^\beta$ and hence
$$  
  \frac{\log \mu(B(x,r))}{\log r}\;=\;\frac{\log \Delta h(x,r)}{\log r} 
  \;\ge\;  \frac{\log(C2^\beta)+\beta\log r}{\log r} 
  \;\underset{r\to 0}{\longrightarrow}\, \beta.
  $$
This implies that $\underline{d}_\mu(x)\ge \beta$ for all $x$,
and hence $\underline{d}_\mu(x)= \beta$ for $\mu$-almost every $x$.

 %%%%%%%%%%%%%%%%%%%%

\begin{lemma}\label{Holder}
The function $h$
is  H\"older continuous with exponent $\beta$.
\end{lemma}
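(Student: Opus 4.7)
The plan is to show $|h(y)-h(x)| \le C|y-x|^\beta$ for some uniform $C$ by combining an approximation of $h$ by $h_n$ with an adaptive choice of $n$ depending on $r = |y-x|$. First I would establish the tail estimate
$$
\max|h - h_n| \;\le\; \sum_{k\ge n}\max|h_{k+1}-h_k| \;=\; \sum_{k\ge n}\max|a_k\circ h_k| \;\le\; \sum_{k\ge n}s_k \;\le\; 2 s_n,
$$
using that $a_k$ takes values in $[0,s_k]$ together with the geometric decay $s_{k+1}\le 2^{-(k+2)}s_k$ built into \eqref{choice_beta}(v). Combined with the mean value bound $|h_n(y)-h_n(x)|\le M_n r$, this yields
$$
|h(y)-h(x)| \;\le\; M_n r + 4 s_n.
$$

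For fixed $n$ the inequality $M_n r \le r^\beta$ holds precisely when $r \le M_n^{-1/(1-\beta)}$, so the natural choice is to select $n = n(r)$ with $r \in I_n := (M_{n+1}^{-1/(1-\beta)}, M_n^{-1/(1-\beta)}]$. Since $M_n\to\infty$, the disjoint intervals $\{I_n\}$ exhaust a half-neighborhood $(0,r_0]$ of $0$. For this $n$ to also control the tail $4 s_n$ by $r^\beta$, I need $s_n \le r^\beta$, which, using $r > M_{n+1}^{-1/(1-\beta)}$, reduces to the key inequality
$$
  M_{n+1} \;\le\; s_n^{-(1-\beta)/\beta}. \qquad (\star)
$$
Once $(\star)$ is in place, $r\in I_n$ yields $|h(y)-h(x)| \le r^\beta + 4 r^\beta = 5 r^\beta$, and the remaining range $r > r_0$ is handled by continuity of $h$ on the compact circle, giving a global constant $C = \max(5, r_0^{-\beta})$.

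The main obstacle is establishing $(\star)$. Applying the chain rule to $h_{n+1} = A_n\circ h_n$ and the derivative bound $\max A_n' \le 2 s_n/\delta_n$ from Lemma \ref{a_s lemma}(3) gives $M_{n+1} \le (2 s_n/\delta_n) M_n$. Inserting $\delta_n = m_n s_n^{1/(\beta+\gamma/n)}$ from \eqref{choice_beta}(vi) and the bound $M_n \le m_n/(2 s_n^{\gamma/n})$ extracted from $s_n^\gamma \le (m_n/(2M_n))^n$ in \eqref{choice_beta}(v), the $m_n$ factors cancel and I obtain
$$
  M_{n+1} \;\le\; s_n^{\,1 - \gamma/n - 1/(\beta + \gamma/n)}.
$$
Thus $(\star)$ is implied by the algebraic inequality $1 - \gamma/n - 1/(\beta+\gamma/n) \ge -(1-\beta)/\beta$, which after rearrangement simplifies to $\beta^2 + \beta\gamma/n \le 1$; this holds for all sufficiently large $n$ because $\beta < 1$. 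This is exactly the role played by conditions (v) and (vi): they force the various scales to line up so that the crude MVT estimate at level $h_n$ matches the $r^\beta$ bound at the next level.
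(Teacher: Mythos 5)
Your argument is correct, and it is genuinely different in structure from the paper's. The paper proves the H\"older bound by induction on $n$, propagating the estimate $|h_n(x)-h_n(y)|\le (3-2^{-n})|x-y|^\beta$ through four scale regimes ($|x-y|\ge s_{n-1}$, $s_n\le|x-y|\le s_{n-1}$, $r_n\le|x-y|\le s_n$, $|x-y|\le r_n$) at each step, using $|A_n(x)-A_n(y)|\le|x-y|+s_n$ and the derivative bound $\max A_n'\le 2s_n/\d_n$. You instead bypass induction entirely: you bound $|h(y)-h(x)|\le M_n r+4s_n$ via the tail estimate $\max|h-h_n|\le 2s_n$ and the mean value theorem, and then choose $n=n(r)$ adaptively so that both terms are $O(r^\beta)$. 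The whole argument is thereby reduced to the single quantitative fact $(\star)$: $M_{n+1}\le s_n^{-(1-\beta)/\beta}$, which you correctly extract from \eqref{choice_beta}(v)--(vi) via $M_{n+1}\le(2s_n/\d_n)M_n$ and $M_n\le m_n/(2s_n^{\gamma/n})$; the resulting condition $\beta(\beta+\gamma/n)\le 1$ in fact holds for \emph{all} $n\ge1$ since $\beta+\gamma/n\le(1+\beta)/2<1$, so you do not even need to restrict to large $n$. Your route is arguably cleaner in that it isolates the one inequality doing the work, at the cost of a slightly worse constant ($5$ plus a compactness constant versus the paper's uniform $3$). One small imprecision: the intervals $I_n=(M_{n+1}^{-1/(1-\beta)},M_n^{-1/(1-\beta)}]$ need not be disjoint or exhaustive unless $M_n$ is monotone, which is not guaranteed; this is repaired by instead setting $n(r)=\max\{k:M_k\le r^{-(1-\beta)}\}$, which is well defined for small $r$ because $M_1=1$ and $M_n\to\infty$, and which still yields $M_{n}r\le r^\beta$ and $r>M_{n+1}^{-1/(1-\beta)}$ as your argument requires.
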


\begin{proof} We  will  show using induction  that 
$$
\begin{aligned}
& |\,h_n(x)-h_n(y)\,|\,\le\, |x-y|^\beta \quad\text{for all } x,y 
   \text{ with }|x-y|\le s_{n-1}, \;\text{ and}\\
&|\,h_n(x)-h_n(y)\,|\,\le\, (3-2^{-n})|x-y|^\beta \quad\text{for all } x,y
 \text{ with }|x-y|\ge s_{n-1}.
\end{aligned}
$$
Clearly, this is true for $h_1=\Id$.  Suppose that it holds for $h_n$.
\vskip.1cm

Recall that $h_{n+1}=A_n \circ h_n$, and the diffeomorphism 
$A_n$ is of the form $\Id +a_n$, where $a_n\ge 0$ is a function
of period $s_n$. It follows that $x\le A_n(x)\le x+s_n$
for any point $x$ in $[0,1]$. Hence,  
$|A_n(x)-A_n(y)| \le |x-y| +s_n\;$ for any $x$ and $y$.
\vskip.2cm

If $|x-y| \ge s_{n-1}$ we obtain 
$$
\begin{aligned}
  &|h_{n+1}(x)-h_{n+1}(y)| \,=\, |A_n(h_n(x))-A_n(h_n(y))| \,\le\,
  |h_n(x)-h_n(y)|+s_n \\
  & \; \le (3-2^{-n})\,|x-y|^\beta +  2^{-n-1}\, |x-y|^\beta \,=\, (3-2^{-n-1})\,|x-y|^\beta
 \end{aligned} 
$$
since by \eqref{choice_beta} 
$\;s_n\le 2^{-n-1} s_{n-1} \le 2^{-n-1}|x-y| \le 2^{-n-1}|x-y|^\beta$ .
\vskip.2cm

If $ s_n \le |x-y| \le s_{n-1}$ we have
$$
|h_{n+1}(x)-h_{n+1}(y)| \;\le\; |h_n(x)-h_n(y)|+s_n \;\le\;
 |x-y|^\beta +s_n \;\le\; 2\,|x-y|^\beta .
$$

It follows from \eqref{choice_beta} and \eqref{properties} that  
$r_n^{\gamma/n}  \le s_n^{\gamma/n} \le 
\min \,\{1/(M_n+1), \; m_n/(2M_n)\}.$
We will use this together with the fact that 
 $\beta+\gamma/n <1$ for all $n$ in the two estimates below.
Suppose that $r_n\le |x-y|\le s_n$. Then 
$$
\begin{aligned}
 &|\,h_{n+1}(x)-h_{n+1}(y)\,| \;\le\; |h_n(x)-h_n(y)|+s_n  \;\le\;
 M_n |x-y| +s_n  \\
 & \;\le\;  (M_n+1)\,s_n  \;=\; (M_n+1)\,r_n^{\beta+\gamma/n} 
  \;\le\; r_n^\beta   \;\le\; |x-y|^\beta.
  \end{aligned}
$$

Finally, for $|x-y| \le r_n$ we have
$$
\begin{aligned}
& |\,h_{n+1}(x)-h_{n+1}(y)\,| \,\le\, \max h_{n+1}' \cdot |x-y| 
 \,=\,\max A_n' \cdot \max h_n' \cdot |x-y|  \\
 &\;\le\, \frac{2s_n}{\d_n}\cdot M_n \cdot |x-y| \;=\; 
 \frac{r_n^{\beta+\gamma/n}}{ r_n m_n} \cdot 2M_n \cdot |x-y| \;\le\; 
 \frac{2M_n}{m_n} \cdot |x-y|^{\beta+\gamma/n}  \\
 & \;\le\, \frac{2M_n}{m_n} \cdot r_n^{\gamma/n}\cdot |x-y|^{\beta} 
   \;\le\;  |x-y|^\beta.
    \end{aligned}
$$

Thus, each function $h_n$ satisfies  
$ |h_n(x)-h_n(y)|\,\le\, 3\,|x-y|^\beta$ for all $x$ and $y.$
Since the sequence $\{h_n\}$ converges to $h$, it follows that 
$$
  |h(x)-h(y)|\,\le\, 3\,|x-y|^\beta \quad \text{for all }x \text{ and }y.
 $$
\end{proof}

%%%%%%%%%%%%%%%%%%%%%%%%%

The proof of the fact that $\overline{d}_\mu(x)=1$ for $\mu$-almost every $x$
does not require any modifications. Thus
   $$
   \underline{d}_{\mu}(x)=\beta \quad \text{and}\quad  
   \overline{d}_{\mu}(x)=1 
   \quad\text{for }\mu\text{-almost every  }    x.
$$
 The same argument as for $\beta=0$ shows that $\od_{B\,} \mu =1$
for all $x$.
\vskip.1cm

 Now we show that $\dim_H\mu =\ud_{B\,} \mu =\beta$.
 Since $\underline{d}_{\mu}(x)=\beta$ for almost all $x$, 
 it follows from  Theorem \ref{Young} that $\dim_H\mu \ge \beta$.
 So it remains to show that $\ud_B \mu \le \beta$.
 As before, let $G_k=\bigcap_{n=k}^\infty E_n$. 
Since $G_k\subset E_n$ and $\;E_n$ consists of $1/s_n$ intervals of length at most $r_n$, we have $N(G_k,r_n) \le 1/s_n$.
As $\log s_n /\log r_n\to \beta$ as $n\to \infty$, we obtain
$$
\ud_{B\,} G_k \;\le \;
\underset{n\to \infty}{\lim}\; \frac{\log N(G_k,r_n)}{\log (1/r_n)} \;\le\;
\underset{n\to \infty}{\lim}\; \frac{\log  s_n}{\log r_n} =\beta
$$
for any $k>0$.  It follows from \eqref{mu(E_n)beta} that 
$\mu(G_k) \to 1$ as $k\to\infty$, and hence 
$\ud_{B\,} \mu \le \beta$. 

\vskip.1cm

 This completes the proof of the theorem.

\vskip1cm

%%%%%%%%%%%%%%%%%%%%%%%%%%%%%%%%%
%%%%%%%%               bibliography                %%%%%%%%%

\end{document}